\newtheorem{theorem}{Theorem}
\newtheorem{proposition}{Proposition}
\newtheorem{remark}{Remark}
\newtheorem{example}{Example}
\begin{document}

\title{From Euler-Jacobi to Bogoyavlensky and back}

\author[1]{Davide Murari}\email{dm2011@cam.ac.uk}
\author[2]{Nicola Sansonetto}\email{nicola.sansonetto@univr.it}

\affil[1]{\orgdiv{Department of Applied Mathematics and Theoretical Physics}, \orgname{University of Cambridige}}

\affil[2]{\orgdiv{Department of Computer Science}, \orgname{University of Verona}}

\abstract{This work focuses on two notions of non-Hamiltonian integrable systems: 
{\it B}-integrability and Euler–Jacobi integrability. We first show that the first notion is stronger. We then investigate which possible ``non-evident'' properties one can add to the Euler–Jacobi Theorem to make the dynamics {\it B}-integrable.}

\maketitle

\section{Introduction}
Integrability has been and still is an active field of research in the Hamiltonian framework (both in finite and infinite dimensions). In contrast, its investigation drove less attention outside the 
Hamiltonian setup, at least until the last decades \cite{kozlov1985integration,hermans1995symmetric,bogoyavlenskij1998extended,bates1999completely,Fedorov1999,zenkov2000dynamics,borisov2002plane,borisov2002surface,fasso2005periodic,fasso2002geometric,borisov2008conservation,bolsinov2011hamiltonization,kozlov2013euler,fasso2015quasi,kozlov2019tensor,balseiro2022first}. For finite-dimensional Hamiltonian systems, the research mainly focused on the notion of complete integrability and Liouville-Arnold Theorem
\cite{arnol2013mathematical,duistermaat1980}, probably in relation with the
fundamental role they play in quantization (see e.g. \cite{woodhouse1992geometric}), semiclassical quantum mechanics (see e.g. \cite{KM1993}) and perturbation theory \cite{arnol2013mathematical}. However, other more generic notions of integrability have been developed, such as the so-called non-commutative integrability introduced independently by Nekhoroshev \cite{nekhoroshev}, and by Mishchenko-Fomenko \cite{MF}, and play as well a crucial role, for example, in Perturbation Theory (for more details, see \cite{fasso2005superintegrable,fasso2022perturbation}) and in quantization \cite{KM1993,sepe2018integrable}. In a non-commutative integrable system, the dynamics is still conjugate 
to a linear one but on isotropic tori, and the phase space 
is endowed with a so-called dual-pair structure\footnote[1]{The 
dual-pair structure named by Weinstein \cite{wenstein1971} 
is also known as bi-foliation or bi-fibration 
\cite{KM1993}: the phase space is foliated/fibrated by isotropic invariant tori, and the isotropic foliation admits a polar (with respect to the symplectic
structure) co-isotropic foliation, which in non-commutative integrable systems is also defined by and invariant for the dynamics.}. 
The Nekhoroshev–Mishchenko–Fomenko~Theorem provides a generalization of the Liouville–Arnold one, reducing to Liouville–Arnold~Theorem if the tori are Lagrangian (in which case the polar foliation coincides with the starting one) and recovers the superintegrable case if the isotropic tori are 1-dimensional (for a detailed discussion on these aspects see, e.g., \cite{FF2005}). 
Generalizations of the Nekhoroshev–Mishchenko–Fomenko Theorem for Hamiltonian systems on Poisson manifolds can be found in \cite{LGMVH}, on almost-symplectic manifolds in \cite{FS2007,SS2012}, and on contact manifolds in \cite{banyaga1999geometry,jovanovic2012noncommutative,khesin2010contact,banyaga2017complete}. To our knowledge, the result in \cite{FS2007} is the first attempt 
to extend the Liouville–Arnold and Nekhoroshev–Mishchenko–Fomenko~Theorems outside 
the Hamiltonian framework. Despite being less studied, the notion of integrability for non-Hamiltonian systems still has several different connotations. Moving away from the Hamiltonian formalism implies not having access to the rich structure of symplectic and Poisson geometry. To compensate for this loss while still being able to conjugate the dynamics to linear flows locally, one needs to require the presence of additional tensor invariants, as can be seen in \cite{bogoyavlenskij1998extended,Fedorov1999}. 
A characterization of integrability for non-Hamiltonian vector fields, named {\it broad} or {\it B}-integrability, is rather recent, and it is due to Bogoyavlensky \cite{bogoyavlenskij1998extended} and, independently, to Fedorov \cite{Fedorov1999}. 
{\it B}-integrability, which reduces to the aforementioned results in the Hamiltonian setup, gives conditions that ensure when a vector field can be (semi-globally) conjugated to a linear flow as long as it admits enough first integrals and dynamical symmetries. 
The lack of an underlining invariant geometric structure or a variational origin of the vector field and hence of the Noether Theorem (see \cite{FS2009}) implies that there are no natural relations between the presence of symmetries and the existence of first integrals, contrary to what occurs in the Hamiltonian case. For a recent and comprehensive analysis of Hamiltonian and non-Hamiltonian integrability for finite dimensional dynamical systems, see \cite{zung2018}.

Nevertheless, there are non-Hamiltonian vector fields 
that do not satisfy the hypotheses of the Bogoyavlensky Theorem but are 
still integrable, that is, their flows can be conjugated to linear ones 
on tori up to a time reparametrization. We stress here that, as proposed by Kozlov \cite{kozlov1985integration,kozlov2013euler,kozlov2019tensor}, the integrability\footnote{We still focus on conjugation to linear flows on tori, however the idea of Kozlov is more general, and refers to integrability by quadrature or, from a more geometric point of view, of the so-called {\it geometric integrability} \cite{sepe2018integrable}.} of a vector field is usually related to the number of tensor invariants characterizing the dynamics. It is typically the case that, at least locally, if there are $n$ tensor invariants for a smooth vector field on an $n$-dimensional manifold, it is integrable. To get semi-global extensions of this result as with {\it B}-integrability, however, the types of tensor invariants that must be present are not entirely arbitrary.

\subsection{The Bogoyavlensky and Euler–Jacobi Theorems}
In this section, we introduce two of the most fundamental results about non-Hamiltonian integrability: the Bogoyavlensky and Euler–Jacobi Theorems, which will be the object of investigation of this paper.

\begin{theorem}[Bogoyavlensky]\label{thm:bogo}
Let $M$ be a smooth $n$-dimensional manifold and $X$ a smooth vector field on $M$. Assume that for an integer $0<k\leq n$ there exists a surjective submersion $$F=(f_1,...,f_{n-k}):M\to \mathbb{R}^{n-k}$$ with compact and connected fibers, such that: 
\begin{itemize}
 \item[B1.] $L_X f_\alpha = 0$, for any $\alpha =1, \ldots, n-k$. 
\end{itemize}
Moreover, assume the existence of $k$ everywhere linearly independent smooth vector fields $Y_1,...,Y_k$ on $M$ such that
\begin{itemize}
\item[B2.] $\mathcal{L}_{Y_i}X = [Y_i,X] = 0$, $i=1,...,k$,
\item[B3.] $[Y_i,Y_j]=0$, $i,j=1,...,k$,
\item[B4.] $\mathcal{L}_{Y_i}f_\alpha = 0$, for all $i=1,...,k$, $\alpha=1,...,n-k$.
\end{itemize}
Then 
\begin{enumerate}
 \item the fibers of $F$ are diffeomorphic to the $k-$dimensional torus $\mathbb{T}^{k}$,
 \item $M$ is a locally trivial fibration in $k$-dimensional tori,
 \item $X$ is conjugated to a vector field with linear flow in a neighborhood of each invariant torus.
\end{enumerate}

\end{theorem}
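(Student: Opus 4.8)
The plan is to reproduce the scheme of the Liouville--Arnold theorem, using the commuting symmetries $Y_1,\dots,Y_k$ in place of the Hamiltonian vector fields of an involutive family of first integrals. I would begin with two observations that set the stage. Since $F$ is a submersion, each fibre $N_c:=F^{-1}(c)$ is a closed embedded $k$-dimensional submanifold of $M$, compact and connected by assumption. Hypothesis B4 says the $Y_i$ are tangent to every $N_c$, and B1 says the same for $X$; since the $N_c$ are compact and invariant, all of $Y_1,\dots,Y_k,X$ are complete and their flows preserve $F$. Because B3 holds, the flows of the $Y_i$ pairwise commute, so they assemble into a smooth action $\Psi\colon\mathbb{R}^k\times M\to M$ preserving each fibre $N_c$.

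To get point (1), I would note that the infinitesimal action $v\mapsto\sum_i v_i Y_i(p)$ is injective because the $Y_i$ are everywhere linearly independent, so $\Psi$ is locally free and each orbit contained in $N_c$ is an open $k$-dimensional submanifold; distinct orbits being disjoint and $N_c$ connected, $N_c$ is a single $\mathbb{R}^k$-orbit, hence $N_c\cong\mathbb{R}^k/\Gamma_c$ with $\Gamma_c=\{t\in\mathbb{R}^k:\Psi(t,p)=p\}$ a discrete subgroup, independent of $p\in N_c$ since $\mathbb{R}^k$ is abelian and acts transitively. Compactness of $N_c$ then forces $\Gamma_c$ to be a rank-$k$ lattice, so $N_c\cong\mathbb{T}^k$.

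For point (2), fix $c_0$, a local smooth section $\sigma\colon U\to M$ of $F$ near $c_0$ (it exists since $F$ is a submersion), and a basis $e_1,\dots,e_k$ of $\Gamma_{c_0}$. Working in local coordinates $(f_1,\dots,f_{n-k},g_1,\dots,g_k)$ adapted to $F$ around $\sigma(c_0)$ and writing the equation $\Psi(e,\sigma(c))=\sigma(c)$ in the transverse coordinates $g$, the implicit function theorem applies — its $e$-derivative at $(e_j,c_0)$ has the $g$-components of $Y_1(\sigma(c_0)),\dots,Y_k(\sigma(c_0))$ as columns, hence is invertible — and yields smooth maps $c\mapsto e_j(c)\in\Gamma_c$ with $e_j(c_0)=e_j$. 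A covolume/continuity argument upgrades this to: $e_1(c),\dots,e_k(c)$ is a basis of $\Gamma_c$ for $c$ near $c_0$. Then
\[
\chi\colon U\times\mathbb{T}^k\longrightarrow F^{-1}(U),\qquad
\chi\big(c,[s_1,\dots,s_k]\big)=\Psi\Big(\textstyle\sum_j s_j e_j(c),\ \sigma(c)\Big)
\]
is well defined (each $e_j(c)$ fixes $N_c$ pointwise) and a diffeomorphism, i.e.\ a local trivialisation of $F$ by $k$-tori.

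Finally, for point (3): differentiating $\chi$ in the torus variables and using $\tfrac{\partial}{\partial t_\ell}\Psi(t,p)=Y_\ell(\Psi(t,p))$ shows that, in the angle coordinates $\varphi$ on $\mathbb{T}^k$ induced by $\chi$, each $Y_\ell$ becomes a $\varphi$-independent vector field, a constant-coefficient combination of the $\partial_{\varphi_i}$ with coefficients depending only on $c$ (through the matrix $E(c)$ whose columns are the $e_j(c)$). Since $X$ is tangent to the fibres, on $F^{-1}(U)$ one may write $X=\sum_i a_i Y_i$ with smooth $a_i$; then B2 together with B3 gives $Y_j(a_i)=0$ for all $i,j$, and because the $Y_j$ span the tangent space to the connected fibre $N_c$ the $a_i$ are constant along fibres, $a_i=a_i(c)$. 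Hence $\chi^*X=\sum_i\omega_i(c)\,\partial_{\varphi_i}$ with $\omega(c)$ a smooth function of $c$ alone — a linear (quasi-periodic) flow on each torus $\{c\}\times\mathbb{T}^k$, with no component in the $c$-directions. That is the claimed conjugation. I expect the main obstacle to be the continuity of the period lattice $c\mapsto\Gamma_c$ needed in point (2) — specifically, ruling out that $e_1(c),\dots,e_k(c)$ generate only a proper finite-index sublattice of $\Gamma_c$ for $c$ near $c_0$; the remainder is a direct transcription of the Liouville--Arnold scheme with the symmetries $\{Y_i\}$ playing the role of the Hamiltonian fields.
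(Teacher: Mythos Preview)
The paper does not supply its own proof of Theorem~\ref{thm:bogo}: the Bogoyavlensky theorem is stated as a known result and attributed to \cite{bogoyavlenskij1998extended} and \cite{Fedorov1999}, so there is nothing in the paper to compare your argument against directly.

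That said, your proposal is the standard proof and is essentially correct. It is precisely the Liouville--Arnold scheme with the commuting symmetries $Y_1,\dots,Y_k$ replacing the Hamiltonian vector fields, which is how Bogoyavlensky himself argues. The one point you flag --- that the smoothly varying $e_1(c),\dots,e_k(c)$ might a priori span only a finite-index sublattice of $\Gamma_c$ for $c$ near $c_0$ --- is the only place requiring care, and the covolume/continuity argument you allude to (the covolume of $\Gamma_c$ varies continuously with $c$, while passing to a proper sublattice would multiply it by an integer $\geq 2$) closes it. Everything else is routine.
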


We will call a pair $(M,X)$, where $M$ is a smooth manifold, and $X$ a smooth vector field on $M$ satisfying the hypotheses of Theorem~\ref{thm:bogo}, \textit{B-integrable} or \textit{broadly-integrable}. 

The notion of {\it B}-integrability is general enough that many non-Hamiltonian systems that manifest quasi-periodic dynamics satisfy its hypothesis. Indeed, from a semi-global point of view, if the flow of a vector field is conjugated to a linear flow on tori, first integrals 
and dynamical symmetries satisfying the hypotheses of Theorem~\ref{thm:bogo} always exists, and thus, Theorem~\ref{thm:bogo} seems more a characterization 
of integrability than a criterion for it. Nevertheless, there exist systems 
that exhibit a quasi-periodic behavior on tori, at least up to a time 
reparametrization, but seem not to be {\it B}-integrable \cite{borisov2002plane,tsiganov2010integrable,gajic2019nonholonomic,garcia2019integrability}. In particular, they cast into the conditions of another theorem, known as the Euler–Jacobi Theorem \cite{arnold1988dynamical,borisov2008conservation}, that guarantees the quasi-periodicity of the motions up to a time reparametrization.

\begin{theorem}[Euler–Jacobi]\label{thm:eulerJac}
Let $X$ be a never-vanishing smooth vector field on an $n$-dimensional smooth manifold $M$, and let 
$F=(f_1,...,f_{n-2}): M \to \mathbb{R}^{n-2}$ be a
surjective submersion with compact and connected fibers such that 
\begin{itemize}
 \item[EJ1.] $\mathcal{L}_{X}f_\alpha = 0$, for $\alpha=1,\ldots,n-2$. 
\end{itemize} 
Let $\mu\in \Omega^n(M)$ be a volume form on $M$ invariant with respect to $X$, i.e. $\mathcal{L}_X\mu=0$. Then:
\begin{itemize}
 \item the level sets of $F$ are diffeomorphic to the 2-dimensional torus, $\mathbb{T}^2$;
 \item $M$ is a locally trivial fibration in $2$-dimensional tori;
 \item if $c\in F(M)$, there exists a neighborhood $U_c\subset F(M)$ of $c$ and a diffeomorphism $\mathcal{C}=(f_1,...,f_{n-2},x_{n-1},x_n):F^{-1}(U_c)\to U_c\times \mathbb{T}^2$, such that in these coordinates $X$ reads
\begin{equation}\label{eq:teoremaeq}
\begin{cases}
\dot{f}_\alpha = 0,\quad \alpha=1,...,n-2,\\
\dot{x}_a = \lambda_a(f)/\Phi,\quad a=n-1,n,
\end{cases}
\end{equation}
for some function $\Phi\in C^\infty(U_c\times \mathbb{T}^2)$, $\Phi=\Phi(F,x_{n-1},x_n)$, and $\lambda_{n-1},\lambda_n\in C^\infty(U_c)$.
\end{itemize} 
\end{theorem}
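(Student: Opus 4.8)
\section*{Proof strategy}

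The plan is to combine Ehresmann's fibration theorem with a fibrewise study of the flow on the $2$-dimensional level sets, using the invariant volume form first to pin down the topology of the fibres and then to linearise the restricted dynamics.

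Since $F$ is a surjective submersion onto $\mathbb{R}^{n-2}$, the compactness and connectedness of the fibres and Ehresmann's theorem make $F$ a locally trivial fibration with typical fibre a fixed compact connected surface $N$. Condition EJ1 says $X$ is tangent to the fibres, where it is nowhere zero, so $\chi(N)=0$ and $N$ is $\mathbb{T}^2$ or the Klein bottle. To rule out the latter I would use $\mu$: writing locally $\mu=\dd f_1\wedge\cdots\wedge\dd f_{n-2}\wedge\sigma$ — possible because $\dd f_1\wedge\cdots\wedge\dd f_{n-2}$ is nowhere zero — the pullback $\sigma_c$ of $\sigma$ to a fibre $F^{-1}(c)$ is independent of the choice of $\sigma$ (the ambiguity lies in the ideal generated by the $\dd f_\alpha$, which restricts to zero) and is a nowhere-degenerate $2$-form, so the fibres are orientable and $N\cong\mathbb{T}^2$. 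Fixing $c$ and a trivialisation $F^{-1}(U_c)\cong U_c\times\mathbb{T}^2$ already gives the first two assertions.

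Next I would check that $X_c:=X|_{F^{-1}(c)}$ preserves $\sigma_c$. From $\mathcal{L}_X f_\alpha=0$ we get $\mathcal{L}_X(\dd f_1\wedge\cdots\wedge\dd f_{n-2})=0$, hence $0=\mathcal{L}_X\mu=\dd f_1\wedge\cdots\wedge\dd f_{n-2}\wedge\mathcal{L}_X\sigma$; this forces $\mathcal{L}_X\sigma$ into the ideal generated by the $\dd f_\alpha$, and restricting to the fibre — legitimate since $X$ is tangent to it — yields $\mathcal{L}_{X_c}\sigma_c=0$. We are thus reduced to the classical problem of a nowhere-vanishing smooth vector field on $\mathbb{T}^2$ preserving a smooth area form, now depending smoothly on the parameter $c$.

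For a single torus, Kolmogorov's linearisation produces coordinates $(x_{n-1},x_n)$ in which $X_c=\Phi^{-1}(\lambda_{n-1}\partial_{x_{n-1}}+\lambda_n\partial_{x_n})$ with $\lambda_{n-1},\lambda_n$ constant and $\Phi>0$ smooth, and I would obtain it from the invariant measure as follows. Since $\sigma_c$ is a finite invariant measure and $X_c$ has no zeros, Poincaré recurrence together with positivity of the density makes every orbit recurrent, which yields a smooth transversal circle $\Sigma$ meeting every orbit, with smooth first-return map $P$. The closed nowhere-zero $1$-form $\iota_{X_c}\sigma_c$ restricts on $\Sigma$ to a smooth positive density $\nu$ invariant under $P$, and its normalised primitive $\psi(s)=\bigl(\int_\Sigma\nu\bigr)^{-1}\int^s\nu$ is a smooth diffeomorphism $\Sigma\to\mathbb{R}/\mathbb{Z}$ conjugating $P$ to the rotation by its rotation number, because a circle diffeomorphism preserving Lebesgue measure must be a rotation. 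Suspending $(\Sigma,P)$ with its return time and absorbing the latter into the positive factor $\Phi$ — so that $\Phi X_c$ has constant return time and hence linear flow — gives the normal form. Finally $\sigma_c$, $\Sigma$, $P$, $\nu$, $\psi$ and the return time all vary smoothly with $c$, so the construction runs over all of $U_c$ and assembles into a diffeomorphism $\mathcal{C}\colon F^{-1}(U_c)\to U_c\times\mathbb{T}^2$ with $\lambda_a=\lambda_a(f)$ and $\Phi=\Phi(F,x_{n-1},x_n)$, which is exactly \eqref{eq:teoremaeq}. The main obstacle is precisely this last step: arranging the fibrewise linearisation to depend smoothly, and not merely continuously, on the base and to patch into a genuine diffeomorphism; what makes it work is that the linearising transversal coordinate $\psi$ is written as an explicit integral of the invariant density and so inherits smoothness in $c$ for free.
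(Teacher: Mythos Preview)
The paper does not supply its own proof of Theorem~\ref{thm:eulerJac}: the Euler--Jacobi Theorem is stated as a classical result with references to \cite{arnold1988dynamical,borisov2008conservation}, and the only hint at an argument appears in Remark~1, which points to Kozlov's proof \cite{kozlov2013euler} based on a Poincar\'e section and the time-reparametrisation needed to make the first-return time constant. Your outline is precisely that standard argument --- Ehresmann for local triviality, $\chi=0$ plus the induced fibre area form for $N\cong\mathbb{T}^2$, then the transversal/return-map linearisation with the non-constant return time absorbed into $\Phi$ --- so it is entirely in line with what the paper invokes.

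Two small points where the sketch is thinner than a full proof would require. First, the passage ``every orbit recurrent, which yields a smooth transversal circle $\Sigma$ meeting every orbit'' is the one genuine gap: recurrence alone does not produce a global cross-section. The clean way is to use that $\alpha_c:=\iota_{X_c}\sigma_c$ is a closed nowhere-vanishing $1$-form on $\mathbb{T}^2$; its cohomology class is nonzero (an exact nowhere-zero $1$-form on a compact manifold cannot exist), and Tischler's argument or a direct computation with $H^1(\mathbb{T}^2;\mathbb{R})$ then gives a smooth fibration $\mathbb{T}^2\to S^1$ whose fibres are the desired transversal circles. Second, the smooth dependence of $\Sigma$ on $c$ is asserted rather than shown; one usually fixes $\Sigma$ once via the trivialisation $F^{-1}(U_c)\cong U_c\times\mathbb{T}^2$ (e.g.\ as $\{x_{n-1}=0\}$ after an initial adjustment) and then lets only $P$, $\nu$, $\psi$ and the return time vary, which makes the smoothness transparent. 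With these two clarifications your argument is complete and matches the approach the literature (and the paper, by citation) uses.
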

We will call a triplet $(M,\mu,X)$, where $M$ is a smooth manifold, $\mu$ a volume form, and $X$ a vector field on $M$ satisfying the hypotheses of Theorem~\ref{thm:eulerJac} \textit{EJ-integrable} or {\it integrable in the sense of Euler–Jacobi}. 

In most of the paper, we restrict the study of {\it B}-integrability to the case $k=2$, i.e., the invariant fibers defined by the first integrals are diffeomorphic to $\mathbb{T}^2$, as for {\it EJ}-integrable systems. We consider explicitly the case $k=1$ only in Proposition \ref{pr:newFirstInt}.

\subsection{Aims and contributions of the paper} \label{se:comp}

This paper investigates some relations between the two integrability theorems previously outlined. This comparison is motivated by the similarities in their assumptions and outcomes. At the core of our work lies the question: Are there any \say{non evident}\footnote{Here by \say{non evident} we mean conditions that if added to the statement of the Euler–Jacobi Theorem one obtains a version of the Bogoyavlensky Theorem, i.e., the assumption of a further functionally independent first integral preserved by the dynamical symmetries, or of a further linearly independent dynamical symmetry that preserves the fibration.} 
sufficient conditions we can add to the assumptions of the Euler–Jacobi Theorem to ensure the {\it B}-integrability of a vector field? Before focusing on this question in Section \ref{se:EJImpliesBogo}, we first show in Section \ref{se:bogoImpliesEJ} that {\it B}-integrability is the strongest among the two notions of integrability, which is an easier problem.

\null
Throughout this paper, we will tacitly assume that all objects (functions, maps, and vector and tensor fields) are smooth and that all considered manifolds are smooth, 
orientable, and connected unless stated otherwise. We also assume the $1$-torus, $S^1$, is diffeomorphic to $\mathbb{R}/2\pi\mathbb{Z}$, and the $2$-torus, $\mathbb{T}^2$, is diffeomorphic to $\mathbb{R}^2/(2\pi \mathbb{Z})^2$.

We now introduce the types of invariants we consider throughout the manuscript. Let $X$ be a vector field on the $n$-dimensional manifold $M$. A differential $k$-form $\alpha$ on $M$, $k\leq n$, is $X$-invariant or invariant along the flow of $X$ if $\mathcal{L}_X\alpha =0$, where $\mathcal{L}_X$ denotes the Lie-derivative along $X$. 
We will focus on one-forms and volume forms, i.e., on the cases $k=1$ and $k=n$. A first integral $f$ of $X$ is a function $f:M\to\mathbb{R}$ satisfying $\mathcal{L}_Xf= X(f)=df(X)=0$. A dynamical symmetry of $X$ is a vector field $Y$ on $M$ such that $[X,Y]=0$. A Lie-point symmetry of $X$ is, instead, a vector field $Y$ on $M$ satisfying $[X,Y]=\lambda X$ for a function $\lambda$ on $M$.

\section{Bogoyavlensky implies Euler–Jacobi}\label{se:bogoImpliesEJ}
We now show that a {\it B}-integrable system is also {\it EJ}-integrable. 

\begin{theorem}[{\it B}-integrability implies {\it EJ}-integrability]
Let $(M,X)$ be a {\it B}-integrable system. Then there exists a volume form $\mu$ on $M$ for which $(M,\mu,X)$ is {\it EJ}-integrable.
\end{theorem}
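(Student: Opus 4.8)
The plan is to produce the required volume form directly from the {\it B}-integrable data and then check $\mathcal{L}_X\mu=0$ by a short Cartan computation. We work in the case $k=2$, so that the fibers are $2$-tori exactly as in the Euler–Jacobi setting, and — as is required on the Euler–Jacobi side — we take $X$ to be nowhere vanishing; this is not forced by B1–B4 (for instance $X\equiv 0$ on $\mathbb{T}^2$ with $Y_1=\partial_\varphi$, $Y_2=\partial_\psi$ is {\it B}-integrable yet not {\it EJ}-integrable), so it must be part of the standing hypotheses here.

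I would define the candidate $\mu\in\Omega^n(M)$ as the solution of
\[
\iota_{Y_1}\iota_{Y_2}\mu \;=\; \dd f_1\wedge\cdots\wedge\dd f_{n-2}.
\]
The first observation is that this equation determines $\mu$ uniquely and that any solution is automatically a volume form: the map $\omega\mapsto\iota_{Y_1}\iota_{Y_2}\omega$ on $\Omega^n(M)$ is pointwise injective because $Y_1,Y_2$ are everywhere linearly independent, and since $\dd f_1\wedge\cdots\wedge\dd f_{n-2}$ vanishes nowhere ($F$ being a submersion), $\mu$ can vanish at no point. Existence is purely local: in a chart adapted to the fibration — one in which, using B3 together with the linear independence of $Y_1,Y_2$, the commuting fields $Y_1,Y_2$ are simultaneously rectified to coordinate vector fields, while by B4 the remaining coordinates may be taken to be $f_1,\dots,f_{n-2}$ — one writes $\mu$ down explicitly, and the local solutions agree on overlaps by the uniqueness just noted, since the right-hand side is globally defined. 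This yields a global volume form $\mu$ on $M$.

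The core step is $\mathcal{L}_X\mu=0$. From B1 one gets $\mathcal{L}_X\dd f_\alpha=\dd(\mathcal{L}_Xf_\alpha)=0$, hence $\mathcal{L}_X(\dd f_1\wedge\cdots\wedge\dd f_{n-2})=0$; from B2, $\mathcal{L}_XY_i=[X,Y_i]=0$. Applying $\mathcal{L}_X$ to the defining equation and using the identity $\mathcal{L}_X(\iota_Y\omega)=\iota_{[X,Y]}\omega+\iota_Y\mathcal{L}_X\omega$ twice, the left-hand side reduces to $\iota_{Y_1}\iota_{Y_2}(\mathcal{L}_X\mu)$ while the right-hand side is $0$; the injectivity above then forces $\mathcal{L}_X\mu=0$.

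It remains to assemble the hypotheses of Theorem~\ref{thm:eulerJac}: $X$ is nowhere vanishing, $F$ is the given surjective submersion with compact connected fibers, EJ1 is exactly B1, and $\mu$ is an $X$-invariant volume form, so $(M,\mu,X)$ is {\it EJ}-integrable. I do not anticipate a real obstacle; the only points requiring care are the global well-definedness of $\mu$, handled by the uniqueness/injectivity remark, and the need to carry along the never-vanishing assumption on $X$, which is not a consequence of {\it B}-integrability.
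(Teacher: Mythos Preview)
Your argument is correct and follows the same high-level idea as the paper --- build a volume form out of the {\it B}-integrable data $(F,Y_1,Y_2)$ and verify $\mathcal{L}_X\mu=0$ --- but the execution differs in two useful ways. The paper works locally: on each trivializing patch $F^{-1}(U_c)$ it writes $\mu_c=\dd f_1\wedge\cdots\wedge\dd f_{n-2}\wedge\alpha_{X_c}\wedge\alpha_{Y_c}$ using a dual basis to $(X,Y)$, asserts invariance ``by construction,'' and then glues with a partition of unity pulled back from the base. You instead characterize $\mu$ globally by the implicit equation $\iota_{Y_1}\iota_{Y_2}\mu=\dd f_1\wedge\cdots\wedge\dd f_{n-2}$, and your pointwise injectivity observation forces the local solutions to agree on overlaps, so no partition of unity is needed at all; the same injectivity then turns the invariance proof into a two-line Cartan computation using only B1 and B2. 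Your route is a bit more transparent and uses B3 explicitly (to rectify $Y_1,Y_2$ simultaneously), whereas the paper's formulation leans on the pair $(X,Y)$ and so implicitly needs $X$ nowhere vanishing and independent of $Y$. On that last point, your remark that the never-vanishing hypothesis on $X$ is required by the Euler--Jacobi statement but is not a consequence of B1--B4 is well taken; the paper does not address it.
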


\begin{proof}
Let $Y\in\mathfrak{X}(M)$ be the dynamical symmetry and $F=(f_1,...,f_{n-2}):M\to\mathbb{R}^{n-2}$ the subjective submersion that ensures the {\it B}-integrability of $X$. We now construct an $X$-invariant volume form $\mu$ on $M$. Since $F$ is a surjective submersion with compact and connected fibers, the Ehresmann Theorem ensures that $F$ defines a locally trivial fibration. Let us consider an open covering $\{U_c\}_{c\in\mathcal{I}}$ of the base of the fibration $F(M)$ such 
that $F^{-1}(U_c)\simeq U_c\times F^{-1}(c)\simeq U_c\times \mathbb{T}^2$. 
On the open set $F^{-1}(U_c)$ we can thus consider the coordinates adapted 
to the fibration $(f_1,...,f_{n-2},x,y)$. We define $X_c$ and $Y_c$ as the 
vector fields related to $X$ and $Y$ via the inclusion map $i_c:F^{-1}(U_c)\to M$, and let $\{\alpha_{X_c},\alpha_{Y_c}\}$ denote the dual basis associated to $X_c$ and $Y_c$ over $F^{-1}(U_c)$. We can then construct a volume form $\mu_c$ on 
$F^{-1}(U_c)$ as
\[
\mu_c = df_1 \wedge ... \wedge df_{n-2} \wedge \alpha_{X_c} \wedge \alpha_{Y_c}.
\]
$\mu_c$ is by construction constant along $X$. To get a globally defined volume form over $M$, we consider the partition 
of unity $\{\rho_c\}_{c\in\mathcal{I}}$ subordinate to the open covering $\{U_c\}_{c\in\mathcal{I}}$ of $F(M)$. 
It induces a partition of unity $\{F^*\rho_c\}_{c\in\mathcal{I}}$ of $F^{-1}(F(M))$ subordinate 
to the open covering $\{F^{-1}(U_c)\}_{c\in\mathcal{I}}$. Such a partition of unity allows us to 
define the volume form
\[
\mu = \sum_{c\in\mathcal{I}} (F^*\rho_c) \widetilde{\mu}_c,
\]
where $\widetilde{\mu}_c$ is the extension by zero of $\mu_c$ from $F^{-1}(U_c)$ to $M$. Since for every $z\in M$ and $c\in\mathcal{I}$ one has
\[
\mathcal{L}_X(F^*\rho_c)(z)=X(\rho_c\circ F)(z) = (d\rho_c)_{F(z)}dF_z(X(z)) = (d\rho_c)_{F(z)}(0)=0,
\]
we conclude that $\mu$ is $X-$invariant and hence $X$ is {\it EJ}-integrable.
\end{proof}

\section{From Euler–Jacobi to Bogoyavlensky}\label{se:EJImpliesBogo}

In this section, we derive some results referring to the other implication. We recover some sufficient conditions ensuring that some systems integrable in the sense of Euler–Jacobi are {\it B}-integrable. More precisely, we address the following question: given an $n$-dimensional manifold $M$ and a vector field $X$ on it with $(n-2)$ functionally independent first integrals, is there any relation between the integrability of $X$ with respect to the two approaches?
This question is rather natural because both Theorems \ref{thm:bogo} and \ref{thm:eulerJac} guarantee the quasi-periodicity of the flow on invariant tori, but a time reparametrization of the vector field may be required for the latter. The existence of an invariant volume form for a system with $n-2$ functionally independent first integrals allows us to get the hypotheses of the Euler–Jacobi Theorem, and the integrability comes at the cost of a reparametrization in time. A more precise question is thus if there are any conditions that we can add to {\it EJ}-integrable systems to ensure their {\it B}-integrability. This question has already been studied from the point of view of guaranteeing the existence of an additional non-trivial first integral, i.e., providing the {\it B}-integrability on $S^1$ \cite[e.g.]{llibre2012note}, but the {\it B}-integrability via a linearly independent dynamical symmetry is not very well studied. This is the focus of our derivations. 

Before presenting the results, we introduce three toy examples that justify our interest in the connections between the two theorems since they point out the different situations that might appear.

\begin{remark}{\it The existence of an independent symmetry field for vector fields on $2$-tori has been previously studied, for example, in \cite{sinaui1976introduction}. A condition that guarantees the existence of the symmetry is the presence of a Poincar\'e section with a constant return time. Such condition strongly resonates with the proof of the Euler–Jacobi Theorem in \cite{kozlov2013euler}, where the time-reparametrization is introduced exactly to compensate the fact that the first return map could not be constant.}
\end{remark}

\subsection{Examples}
Let us focus on never-vanishing vector fields over $\mathbb{T}^2$. The examples we provide are of dynamical systems for which it is simple to verify the preservation of the volume form $\mu =dx\wedge dy$, when expressed in local coordinates. We remark that, on $\mathbb{T}^2$, the conservation of a volume form is enough to ensure the {\it EJ}-integrability. The first considered vector field will also be {\it B}-integrable on $\mathbb{T}^2$ 
thanks to an additional dynamical symmetry $Y$, 
the second will be {\it B}-integrable on $S^1$ thanks to the presence of a non-trivial 
first integral, while the third will not be {\it B}-integrable.

\begin{example}[{\it B}-integrable on $\mathbb{T}^2$]
Let us consider the non-vanishing vector field
\[
X(x,y) = \left(\sin(y)+\sqrt{2}\right)\partial_x + \partial_y.
\]
First of all, we notice that
\begin{equation}\label{eq:isEJ}
\mathcal{L}_{X}\left(dx\wedge dy\right) = d\left((\sin(y)+\sqrt{2})dy - dx\right) =0,
\end{equation}
and hence $X$ is {\it EJ}-integrable. We now show that $X$ admits a linearly independent dynamical symmetry $Y\in\mathfrak{X}(\mathbb{T}^2)$, and is hence {\it B}-integrable on $\mathbb{T}^2$. Let us define a generic vector field $Y$ as
\[
Y(x,y) = a(x,y)\partial_x + b(x,y)\partial_y,
\]
for a pair of functions $a,b\in\mathcal{C}^{\infty}(\mathbb{T}^2)$. By enforcing $[X,Y]=0$, we get
\[
0=[X,Y] = \left(\partial_x a (\sin{y}+\sqrt{2}) + \partial_y a -b\cos(y)\right)\partial_x + \left(\partial_x b(\sin{y}+\sqrt{2})+\partial_yb\right)\partial_y.
\]
The only way for the second component to vanish is that $\mathcal{L}_Xb = 0$, i.e., we must have that $b$ is a first integral of $X$. However, this can happen only if $b\in\mathcal{C}^{\infty}(\mathbb{T}^2)$ is a constant function, because $b$ must be $2\pi$-periodic in both its arguments to be well defined on $\mathbb{T}^2$. Let us then suppose $b$ is constantly equal to $k$, for a $k\in\mathbb{R}$. It thus remains to impose
\[
\partial_x a \left(\sin{y}+\sqrt{2}\right) + \partial_y a = k\cos{y},
\]
which can be solved with the methods of characteristics, leading to the solution $a(x,y)=k(\sin(y)+c)$, for a generic $c\in\mathbb{R}$. We thus conclude that, for every $c\in\mathbb{R}\setminus \{\sqrt{2}\}$, the vector field
\[
Y_c(x,y) = \left(\sin{y}+c\right)\partial_x + \partial_y\in T_{(x,y)}\mathbb{T}^2
\]
is a linearly independent dynamical symmetry of $X$, which is hence {\it B}-integrable on $\mathbb{T}^2$.
\end{example}

\begin{example}[{\it B}-integrable on $S^1$]

This non-vanishing vector field provides the second example
\[
X(x,y) = (\sin(y)+2)\partial_x + \sin(x)\partial_x.
\]
By the same calculation as in Example \ref{eq:isEJ}, one can verify that $X$ is {\it EJ}-integrable since $\mathcal{L}_X \mu=0$, with $\mu=dx\wedge dy$. We can also see that $i_X{\mu} = (\sin(y)+2)dy - \sin(x)dx$. Thus, on the covering space $\pi:\mathbb{R}^2\to \mathbb{T}^2$, we have $i_{\widetilde{X}}\widetilde{\mu} = d\widetilde{h}(x,y)$, with $\widetilde{h}:\mathbb{R}^2\to\mathbb{R}$ defined as $\widetilde{h}(x,y)=-\cos(y)+2y+\cos(x)$, $\widetilde{X}$ $\pi$-related with $X$ and $\widetilde{\mu}=\pi^*\mu$. Despite $\widetilde{h}$ not being $2\pi-$periodic, and hence $i_{X}\mu$ not being exact on $\mathbb{T}^2$, this result informs us that $F = \Phi \circ \widetilde{h}$, with $\Phi:\mathbb{R}\to\mathbb{R}$ smooth and $2\pi-$periodic, is conserved by the dynamics on $\mathbb{T}^2$, and it is also in $\mathcal{C}^{\infty}(\mathbb{T}^2)$ because $2\pi-$periodic in both $x$ and $y$. $F$ is thus a non-trivial first integral of $X$, ensuring that $X$ is {\it B}-integrable on $S^1$.
\end{example}

\begin{example}[Not {\it B}-integrable]\label{ex:example3}

In \cite[Example II.12]{perrella2023existence}, the authors show that the vector field $X=fX_0\in\mathfrak{X}(\mathbb{T}^2)$, $f\in\mathcal{C}^{\infty}(\mathbb{T}^2)$, $f>0$, with
\[
X_0(x,y) = a\partial_x + b\partial_y,
\]
$(a,b)\in\mathbb{R}^2$ incommensurable, neither admits linearly independent symmetries nor non-trivial first integrals. The non-vanishing vector field $X$ is thus not {\it B}-integrable. On the other hand, $X=fX_0$ preserves the volume form $\mu = 1/f dx\wedge dy$ since $di_X\mu=0$. Thus, $X$ is {\it EJ}-integrable but not {\it B}-integrable. 
\end{example}

\subsection{Conservation of a differential one-form}
In this subsection, we analyze some sufficient conditions that one should add to the hypotheses of the Euler–Jacobi Theorem to ensure the existence of an independent dynamical symmetry $Y$ of the vector field $X$ and hence the {\it B}-integrability of the system on two-dimensional tori. We remark that a key ingredient in the proof of the Euler–Jacobi Theorem is the fiberwise construction\footnote{This construction is fiberwise with respect to the fibers of the fibration defined by the first integrals.} of a dynamical symmetry $Y$ of a time-reparametrization 
$fX$ of $X$, which is not in general a dynamical symmetry of $X$ itself,
with $f$ a nowhere vanishing function on the torus. Therefore, we wish to investigate possible 
non-evident conditions that added to the hypotheses of the Euler–Jacobi Theorem guarantee the {\it B}-integrability of the system. 

Let $X$ be a vector field on $\mathbb{T}^2$ that, in local coordinates $(x,y)\in S^1\times S^1$, reads:
\begin{equation}\label{eq:vf}
X = X^1(x,y)\partial_x + X^2(x,y)\partial_y.
\end{equation}

Assume that $\mathcal{L}_X(\mu)=0$, with $\mu$ a volume form on the $2$-torus, then we investigate the conditions for the existence of an independent dynamical symmetry of $X$. If the coefficients in \eqref{eq:vf} depend only on one angle, it is easy to find a dynamical symmetry.
\begin{itemize}
 \item If $X^i(x,y) = f^i(x)$, with $f^1$ which is not identically zero, 
 then $Y=\partial_y$ is an independent symmetry for $X$.
 \item If $X^i(x,y) = g^i(y)$, with $g^2$ which is not identically zero, 
 then $Y=\partial_x$ is an independent symmetry for $X$.
\end{itemize}
These vector fields are {\it EJ}-integrable and {\it B}-integrable. 
Apart from these simple cases, we now analyze more general situations.

\begin{proposition}
An {\it EJ}-integrable system $(M,\mu,X)$, with $M$ an $n$-dimensional manifold, is {\it B}-integrable if there exists a one-form $\alpha$ on $M$ constant along the flow of $X$ and one of the two following conditions holds:
\begin{enumerate}
    \item $df_1\wedge ... \wedge df_{n-2}\wedge \alpha\in\Omega^{n-1}(M)$ is not proportional to $i_X\mu$,
    \item there exists $z\in M$ such that $\alpha_z=0$ but $(d\alpha)_z\neq 0$.
\end{enumerate}
\end{proposition}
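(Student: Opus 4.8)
The plan is to produce, from the data of an EJ-integrable system plus the extra $X$-invariant one-form $\alpha$, a dynamical symmetry $Y$ of $X$ that is everywhere linearly independent of $X$ and annihilates the first integrals $f_1,\dots,f_{n-2}$; together with the $f_\alpha$'s this gives exactly hypotheses B1--B4 of Theorem~\ref{thm:bogo} with $k=2$, hence $B$-integrability. The natural candidate is the vector field $Y$ defined on each fibre $\mathbb{T}^2$ (equivalently on $F^{-1}(U_c)$ in the coordinates $(f_1,\dots,f_{n-2},x_{n-1},x_n)$ furnished by the Euler--Jacobi Theorem) by the requirements $df_\beta(Y)=0$ for all $\beta$, $\alpha(Y)=0$, and $\mu(df_1,\dots,df_{n-2},X,Y)=1$ (or some nowhere-zero normalisation). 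Since $\mu$ is a volume form and $df_1\wedge\cdots\wedge df_{n-2}$ is nondegenerate transverse to the fibres, these $n$ linear conditions determine $Y$ uniquely at each point, provided the covector $\alpha$ is, along $X$, ``transverse'' in the appropriate sense; this is precisely what the two alternative hypotheses guarantee. I would first dispose of hypothesis~1: contracting $df_1\wedge\cdots\wedge df_{n-2}\wedge\alpha$ and $i_X\mu$ against the tangent vectors to a fibre, both restrict to multiples of the area form on $\mathbb{T}^2$; non-proportionality of the two $(n-1)$-forms means that on each fibre the restricted one-form $\alpha|_{\mathbb{T}^2}$ is \emph{not} a constant multiple of $i_X\mu|_{\mathbb{T}^2}=i_X(\mathrm{area})$, so $\ker\alpha$ and $\ker(i_X\mu)=\langle X\rangle$ are distinct line fields at a generic point, and the linear system above has a solution there.

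The main work is then to show that the locally-defined $Y$ is (a) globally well defined, (b) nowhere zero and everywhere independent of $X$, and (c) actually commutes with $X$, not merely with a reparametrisation $fX$. For (a), uniqueness of the solution of the defining linear system forces the local pieces to agree on overlaps, so $Y$ glues to a global vector field on $M$; the conditions $df_\beta(Y)=0$ give B4 immediately. For (b), I would argue that if $Y$ vanished or were proportional to $X$ at a point, then at that point $\alpha$ would annihilate $\langle X\rangle$, i.e.\ $\alpha\wedge i_X\mu$ (an $n$-form, hence a multiple of $\mu$) would vanish there; running this argument fibrewise and using that $\mathcal{L}_X\alpha=0$, $\mathcal{L}_X\mu=0$ implies the function $g$ with $\alpha\wedge i_X\mu = g\,\mu$ is $X$-invariant, hence constant along each (connected) torus. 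So $g$ is either identically zero on a fibre or nowhere zero on it; hypothesis~1 rules out $g\equiv 0$ (that is exactly proportionality of the two $(n-1)$-forms), giving (b) on every fibre. For (c), the cleanest route is to verify $\mathcal{L}_X$ of each of the $n$ defining equations for $Y$: $\mathcal{L}_X(df_\beta)=0$, $\mathcal{L}_X\alpha=0$, $\mathcal{L}_X\mu=0$ and $\mathcal{L}_X(df_1\wedge\cdots)=0$, so $[X,Y]$ satisfies the same homogeneous linear system that $Y$ does except with right-hand side $0$ in the normalisation slot; by the same uniqueness/non-degeneracy used above, $[X,Y]$ must be a multiple of... here one has to be a little careful, since the homogeneous system $df_\beta(Z)=0,\ \alpha(Z)=0,\ \mu(df_1,\dots,X,Z)=0$ has solution space $\langle X\rangle$, so one gets only $[X,Y]=hX$ for some function $h$, i.e.\ a priori only a Lie-point symmetry. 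The extra input needed is that $h\equiv 0$: one shows this by contracting $\mathcal{L}_X(i_Y\mu')$ for the fibre area form, or more simply by noting $\mathcal{L}_X\big(\mu(df_1,\dots,df_{n-2},Y,\cdot)\big)$ paired suitably forces $L_X$(the normalising function)$=h\cdot(\text{normalising function})$, and since the normalising function is constant ($=1$) along $X$, $h=0$.

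Hypothesis~2 is handled by a separate, shorter argument using a different construction of $Y$: near a point $z$ where $\alpha_z=0$ but $(d\alpha)_z\neq 0$, the invariance $\mathcal{L}_X\alpha = i_X d\alpha + d(i_X\alpha)=0$ together with $\alpha_z=0$ gives information linking $i_X\alpha$ and $d\alpha$; I expect the point is that $i_X\alpha$ is an $X$-invariant function vanishing at $z$, hence the level set through $z$ is invariant, and $(d\alpha)_z\neq0$ lets one split off a new independent invariant direction on the relevant fibre, again solving for $Y$ by a nondegenerate linear system on $\mathbb{T}^2$; the global-gluing, nonvanishing, and $[X,Y]=0$ verifications then parallel the first case.

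The step I expect to be the genuine obstacle is \textbf{(c)}, upgrading the candidate $Y$ from a Lie-point symmetry ($[X,Y]=hX$) to an honest dynamical symmetry ($[X,Y]=0$): the fibrewise construction in the Euler--Jacobi Theorem famously only yields a symmetry of a time-reparametrised field $fX$, and the whole content of the Proposition is that the extra hypothesis on $\alpha$ is exactly strong enough to kill the reparametrisation function. I would pin this down by writing $X$ in the Euler--Jacobi normal form \eqref{eq:teoremaeq}, computing $\alpha$ in those coordinates using $\mathcal{L}_X\alpha=0$ (which constrains its coefficients to depend on the angles only through $X$-invariant combinations), and checking directly that the constructed $Y$ has constant coefficients along the flow — i.e.\ $h=0$ — precisely when $\alpha$ satisfies either hypothesis~1 or~2.
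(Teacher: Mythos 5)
Your construction of $Y$ is not the one that makes this proposition work, and the obstacle you yourself flag as the ``genuine obstacle'' --- upgrading $[X,Y]=hX$ to $[X,Y]=0$ --- is a real gap that your sketch does not close. The paper sidesteps this issue entirely by defining $Y$ through the volume form: $i_Y\mu = df_1\wedge\cdots\wedge df_{n-2}\wedge\alpha$. Since $Z\mapsto i_Z\mu$ is an isomorphism, this $Y$ is globally defined with no gluing or genericity caveats, it is automatically tangent to the fibres, and $[X,Y]=0$ is immediate: $i_{[X,Y]}\mu=\mathcal{L}_X(i_Y\mu)-i_Y(\mathcal{L}_X\mu)=0$ because every factor $df_i$, $\alpha$, $\mu$ is $X$-invariant, and then non-degeneracy of $\mu$ gives $[X,Y]=0$. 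With your linear system ($df_\beta(Y)=0$, $\alpha(Y)=0$, plus a $\mu$-normalisation), $[X,Y]$ only satisfies the homogeneous equations, so a priori you obtain a Lie-point symmetry, and the argument you gesture at (``the normalising function is constant along $X$, hence $h=0$'') is never carried out. Worse, your system degenerates precisely where $\alpha$ annihilates the relevant directions --- in particular at the point $z$ of hypothesis 2, where $\alpha_z=0$ --- so your $Y$ is not even defined there, and the ``separate, shorter argument'' you promise for hypothesis 2 is never given. A further flaw: your dichotomy in step (b) relies on the claim that an $X$-invariant function is constant on each connected fibre; that is only true on fibres where an orbit is dense, which EJ-integrability does not guarantee (on a given torus the reparametrised flow may be periodic).

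Note also that hypotheses 1 and 2 play a different role in the actual proof than the one you assign them: they are not needed to make a pointwise linear system solvable, but only to rule out, a posteriori, that the (already constructed, already commuting) $Y$ is of the form $fX$. Under hypothesis 1 this is immediate, since $Y=fX$ would give $df_1\wedge\cdots\wedge df_{n-2}\wedge\alpha = f\, i_X\mu$, contradicting non-proportionality. Under hypothesis 2 one writes $i_X\mu = g\, df_1\wedge\cdots\wedge df_{n-2}\wedge\alpha$ with $g=1/f$, applies $d$, uses $di_X\mu=\mathcal{L}_X\mu=0$, and evaluates at $z$ (where $\alpha_z=0$ and $(d\alpha)_z\neq 0$) to force $g(z)=0$, a contradiction. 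Your treatment of hypothesis 2 (``split off a new independent invariant direction'') contains no mechanism of this kind and would have to be built from scratch; as it stands, neither the commutation step nor case 2 is proved.
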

We remark that the vector field $Y$ with $i_Y\mu=df_1\wedge ... \wedge df_{n-2}\wedge \alpha$ is well defined since $\mu$ is a volume form.
\begin{proof}
Consider the vector field $Y$ on $M$ such that $i_Y\mu = df_1\wedge ... \wedge df_{n-2}\wedge \alpha$. Since
\[
i_{[X,Y]}\mu = i_{\mathcal{L}_X Y}\mu = \mathcal{L}_X(i_Y\mu) - i_Y(\mathcal{L}_X\mu) = \mathcal{L}_X(i_Y\mu) = 0
\]
given that $ \mathcal{L}_X\alpha = 0$ and $\mathcal{L}_X(df_i)=d\mathcal{L}_Xf_i=0$, it follows $[X,Y]=0$. By contradiction, let us assume that there exists a strictly positive function $f$ on $M$ such that $Y=fX$. This implies
\[
df_1\wedge ... \wedge df_{n-2}\wedge \alpha = i_Y \mu = f i_X\mu,
\]
hence contradicting assumption (i). 

Moving to assumption (ii), we assume, by contradiction, that $Y=fX$ with $f$ a strictly positive function on $M$. Thus, $g\,df_1\wedge ... \wedge df_{n-2}\wedge \alpha = i_X\mu$ with $g=1/f$, and hence
\[
di_X\mu =\mathcal{L}_X\mu = 0 = d(df_1\wedge ... \wedge df_{n-2}\wedge (g\alpha)) = (-1)^{n-2}df_1\wedge ... \wedge df_{n-2}\wedge (dg\wedge \alpha + g\,d\alpha).
\]
Let us now focus on the point $z\in M$ with $\alpha_z=0$ but $(d\alpha)_z\neq 0$, where 
\begin{equation}\label{eq:vanishing}
0=(di_X\mu)_z =g(z)(df_1\wedge ... \wedge df_{n-2}\wedge d\alpha)_z(-1)^{n-2}.
\end{equation}
Since $F$ is assumed to be a surjective submersion, the $(n-2)$-differential form $df_1\wedge ... \wedge df_{n-2}$ is never vanishing. Thus, \eqref{eq:vanishing} implies that $g(z)=0$, which is a contradiction. We conclude that $X$ is {\it B}-integrable since it admits a linearly independent dynamical symmetry $Y$.
\end{proof}

\subsection{{\it EJ}-integrable systems, Lie-point symmetries, and {\it B}-integrability}
In this subsection, we relate the notions of {\it B}-integrability, {\it EJ}-integrability, and Lie-point symmetry. We start with a proposition guaranteeing that when a suitable Lie-point symmetry exists, one can also build a dynamical symmetry. This first result can also be seen as a particular instance of {\it B}-integrability, where $X$ has a dynamical symmetry $Z$ preserving the first integrals in $F$ and has a particular structure. We then conclude with a proposition combining the three notions, and ensuring that given a pair of {\it EJ}-integrable systems $(M,\mu,X)$ and $(M,\mu,Y)$, with $Y$ a Lie-point symmetry of $X$, then $(M,X)$ is {\it B}-integrable.
\begin{proposition}
Let $F=(f_1,\ldots,f_{n-2}):M\to\mathbb{R}^{n-2}$ be a surjective submersion with compact and connected fibers, and $X,Y$ be everywhere linearly independent vector fields on $M$ that are tangent to the fibers of $F$. $X$ is {\it B}-integrable if one of the following conditions is satisfied:
\begin{itemize}
 \item[i.] there exist $g,h\in\mathcal{C}^{\infty}(M)$, $h$ never vanishing, such that $[X,Y] = g Y$, and $X(h) = -g h$,
 \item[ii.] there exist $g,h\in\mathcal{C}^{\infty}(M)$, $h$ never vanishing, such that $[X,Y]=g X$, and $X(h)=-g$.
\end{itemize}
\end{proposition}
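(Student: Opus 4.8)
The plan is to observe that, since the fibers of $F$ are compact and connected and $X$ is itself tangent to them, Theorem~\ref{thm:bogo} with $k=2$ applies as soon as we can exhibit a single vector field $Z$ on $M$ which (a) commutes with $X$, (b) is everywhere linearly independent of $X$, and (c) is tangent to the fibers of $F$. We then invoke Theorem~\ref{thm:bogo} with the pair $(Y_1,Y_2)=(X,Z)$: indeed $X$ being fiber-tangent gives $\mathcal{L}_X f_\alpha=0$ for all $\alpha$, so B1 holds; $[X,X]=0$ and $[X,Z]=0$ give B2; $[X,Z]=0$ gives B3; and (c) together with the fiber-tangency of $X$ gives B4. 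Hence the whole problem reduces to constructing such a $Z$ in each of the two cases, and the nowhere-vanishing of $h$ (together with the everywhere-linear-independence of $X,Y$) is what will supply (b).

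In case (i) the natural candidate is $Z:=hY$. A one-line Leibniz computation gives $[X,hY]=X(h)\,Y+h\,[X,Y]=-gh\,Y+h\,(gY)=0$, so (a) holds; since $h$ is nowhere zero and $Y$ is everywhere independent of $X$, the field $Z=hY$ is everywhere independent of $X$, giving (b); and $Z$ is a pointwise multiple of $Y$, hence fiber-tangent, giving (c). In case (ii) the candidate is $Z:=Y+hX$: then $[X,Y+hX]=[X,Y]+X(h)\,X+h\,[X,X]=gX-gX=0$, which is (a); $Z\wedge X=Y\wedge X\neq 0$ pointwise, which is (b); and $Z$ is a combination of the fiber-tangent fields $X$ and $Y$, which is (c). In both situations Theorem~\ref{thm:bogo} then yields the \textit{B}-integrability of $X$, with the two everywhere-independent commuting symmetries $X$ and $Z$.

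There is no serious obstacle here; the only points requiring care are bookkeeping — checking that all of B1--B4 and the independence hypothesis of Theorem~\ref{thm:bogo} are met simultaneously by the pair $(X,Z)$ — and tracking where each assumption is used (the nowhere-vanishing of $h$ is exactly what secures (b) in case (i); in case (ii) linear independence is automatic and only $X(h)=-g$ is actually needed). One may also remark that the two choices of $Z$ are not arbitrary: writing $Z=aY+bX$ and imposing $[X,Z]=0$ forces, under the hypothesis $[X,Y]=gX$, the conditions $X(a)=0$ and $ag+X(b)=0$, whose simplest solution $a\equiv 1$ recovers $Z=Y+hX$; the analogous computation under $[X,Y]=gY$ recovers $Z=hY$.
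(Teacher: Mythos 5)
Your proof is correct and follows essentially the same route as the paper: build a fiber-tangent dynamical symmetry commuting with $X$ and invoke Theorem~\ref{thm:bogo} with the pair $(X,Z)$, taking $Z=Y+hX$ in case (ii) exactly as the paper does, and $Z=hY$ in case (i) where the paper uses the equally valid variant $Z=X+hY$. Your side remark that in case (ii) the nowhere-vanishing of $h$ is not needed for linear independence (only $X(h)=-g$ is used) is accurate and in fact slightly cleaner than the paper's phrasing.
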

Before proving the proposition, we remark that the case $[X,Y]=0$, as assumed in the Bogoyavlensky Theorem, is a particular one falling within the intersection of assumptions (i) and (ii), where $X$ is a Lie-point symmetry of $Y$ and vice versa. More explicitly, $[X,Y]=0$ if and only if $g$ is identically zero.
\begin{proof}
\begin{itemize}
 \item[i.] Let us consider the vector field $Z=X+h Y$. We first observe that $Z$ is tangent to the fibers of $F$, and we now prove that it is also a linearly independent dynamical symmetry of $X$. The two vector fields $X$ and $Z$ commute since
\begin{eqnarray*}
[X,X+h\, Y] &= X(h)Y + h[X,Y] = -gh Y + h g Y =0.
\end{eqnarray*}
Since $h$ never vanishes, $Z$ is linearly independent of $X$, which is hence {\it B}-integrable.
\item[ii.] Let us introduce the vector field $Z=h X + Y$
and compute
\[
[X,Z] = (X(h)+g)X = 0.
\]
If $X(h)=-g$, we conclude $[X,Z]=0$, and since $h$ never vanishes, we recover a linearly independent dynamical symmetry. $X$ is thus {\it B}-integrable.
\end{itemize}
\end{proof}

We now present a slightly different result closer to the Euler–Jacobi integrability setup. We consider two {\it EJ}-integrable systems $(M,\mu,X)$ and $(M,\mu,Y)$. We formalize this result in the following proposition.
\begin{proposition}\label{pr:newFirstInt}
 Let $(M,\mu,X)$ and $(M,\mu,Y)$ be two {\it EJ}-integrable systems with the same invariant fibration, and let $X$ and $Y$ be everywhere linearly independent vector fields. Assume there exists a function $\lambda\in\mathcal{C}^{\infty}(M)$ such that $[X,Y]=\lambda X$. Then $X$ is {\it B}-integrable.
\end{proposition}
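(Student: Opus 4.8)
The plan is to manufacture, from the common invariant volume form, the two vector fields, and the common fibration, an honest extra first integral of $X$, and then to read off {\it B}-integrability case by case. Fix the common fibration $F=(f_1,\dots,f_{n-2})$, so that both $X$ and $Y$ are tangent to its fibers $F^{-1}(c)\cong\mathbb{T}^2$, and define $\phi\in\mathcal{C}^\infty(M)$ by
\[
i_Y i_X\mu=(-1)^{n-2}\,\phi\,df_1\wedge\dots\wedge df_{n-2}.
\]
This is well posed: since $F$ is a submersion and $X,Y$ are vertical, $i_Yi_X\mu$ is chartwise a multiple of $df_1\wedge\dots\wedge df_{n-2}$, and the multiple does not depend on the chart because modifying the local $2$-form that divides $\mu$ by a term containing some $df_\alpha$ contributes nothing when contracted with the vertical pair $X,Y$. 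Restricted to a fiber, $\phi$ equals $\omega_c(X,Y)$, where $\omega_c$ is the area form induced on $F^{-1}(c)$ by $\mu$ and $df_1\wedge\dots\wedge df_{n-2}$; as $X$ and $Y$ are everywhere linearly independent and $\omega_c$ is nondegenerate on the $2$-dimensional fiber, $\phi$ is nowhere zero.

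The core of the argument is the identity $X(\phi)=0$. Using $\mathcal{L}_Xi_X=i_X\mathcal{L}_X$, $i_Xi_X=0$, $\mathcal{L}_X\mu=0$ and the hypothesis $[X,Y]=\lambda X$,
\[
\mathcal{L}_X\bigl(i_Yi_X\mu\bigr)=i_{[X,Y]}\,i_X\mu+i_Y\,\mathcal{L}_X\bigl(i_X\mu\bigr)=\lambda\,i_Xi_X\mu+i_Yi_X\mathcal{L}_X\mu=0,
\]
whereas $\mathcal{L}_X\bigl(df_1\wedge\dots\wedge df_{n-2}\bigr)=0$ since the $f_\alpha$ are first integrals, so $\mathcal{L}_X(i_Yi_X\mu)=(-1)^{n-2}X(\phi)\,df_1\wedge\dots\wedge df_{n-2}$, and comparing yields $X(\phi)=0$. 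The same bookkeeping with $X,Y$ interchanged (now $\mathcal{L}_Y(i_X\mu)=-\lambda\,i_X\mu$) gives $Y(\phi)=-\lambda\phi$, and the fiberwise analogue $i_{[X,Y]}\omega_c=\mathcal{L}_X(i_Y\omega_c)=-d(\phi|_{F^{-1}(c)})$ gives the key relation $\lambda\,i_X\omega_c=-d(\phi|_{F^{-1}(c)})$ on every fiber; since $i_X\omega_c$ is a nowhere-vanishing closed $1$-form (nowhere vanishing because $X$ is nowhere zero and $\omega_c$ nondegenerate), differentiating this relation also forces $X(\lambda)=0$ and $d\lambda\wedge i_X\omega_c=0$ along each fiber, i.e. $\lambda$ is constant along the $X$-orbits inside every torus.

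I would then split into two cases. If $\lambda\equiv 0$ then $[X,Y]=0$ and, $Y$ being vertical ($\mathcal{L}_Yf_\alpha=0$), the pair $Y_1=X$, $Y_2=Y$ — linearly independent, commuting, tangent to the fibers — realises the Bogoyavlensky hypotheses for the original fibration, so $(M,X)$ is {\it B}-integrable on $\mathbb{T}^2$. (One cannot in general reduce the other case to this one by replacing $Y$ with $Y+hX$, since that needs a nowhere-zero solution of the obstructed cohomological equation $X(h)=-\lambda$; this is why $k=1$ genuinely appears.) If $\lambda\not\equiv 0$, then on each fiber where $\lambda$ does not vanish identically $\phi$ is non-constant (from $\lambda\,i_X\omega_c=-d(\phi|_{F^{-1}(c)})$ and $i_X\omega_c$ having no zeros), so a non-constant continuous first integral of $X$ exists there, forcing the $X$-orbits on that torus to be periodic; they foliate the fiber into circles, $\phi$ descends to the $1$-dimensional orbit space, and (locally, completed by a determination of the orbit angle) this enlarges $F$ to a surjective submersion $(f_1,\dots,f_{n-2},x_{n-1})$ with compact connected circle fibers, for which $X$ itself is the one required dynamical symmetry, so $(M,X)$ is {\it B}-integrable on $S^1$. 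The step I expect to be the real obstacle is this last globalisation: the naive map $(f_1,\dots,f_{n-2},\phi)$ fails to be a submersion at the fiberwise extrema of $\phi$, so one must work with the circle-valued orbit coordinate instead and control how the base loci $\{\lambda|_{F^{-1}(c)}\equiv 0\}$ and its complement fit together — and it is precisely here that the special form $[X,Y]=\lambda X$, via the parallelism $d\lambda\parallel i_X\omega_c$ along fibers, does work that a generic bracket would not.
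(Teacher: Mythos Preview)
Your approach is the paper's: construct the scalar $\omega_c(X,Y)$, show it is $X$-invariant from $[X,Y]=\lambda X$ and $\mathcal{L}_X\mu=0$, then split on whether $\lambda\equiv 0$ (giving the dynamical symmetry $Y$, hence {\it B}-integrability on $\mathbb{T}^2$) or not (giving a new first integral, hence {\it B}-integrability on $S^1$). Your global definition of $\phi$ via $i_Yi_X\mu$ is in fact cleaner than what the paper does --- the paper writes $\mu_c=df_1\wedge\dots\wedge df_{n-2}\wedge\omega_c$ in local trivializations and then patches the local functions $\omega_c(X_c,Y_c)$ with a partition of unity pulled back from the base; since, as you observe, $\omega_c(X,Y)$ is independent of the choice of $\omega_c$ (adding a term containing some $df_\alpha$ contributes nothing when contracted with the vertical pair $X,Y$), the patching is redundant and your intrinsic construction recovers the same function globally. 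The extra identities you derive ($Y(\phi)=-\lambda\phi$, $X(\lambda)=0$ fiberwise, $d\lambda\parallel i_X\omega_c$) are correct but not needed for the argument as the paper runs it. Your worry about the final globalisation --- that $(f_1,\dots,f_{n-2},\phi)$ fails to be a submersion at the fiberwise extrema of $\phi$ --- is legitimate, but the paper does not address it either: it simply asserts that functional independence of the new integral from the $f_\alpha$ yields {\it B}-integrability on $S^1$, without checking the submersion hypothesis of Theorem~\ref{thm:bogo} on all of $M$. So your proposal matches the paper's proof in substance and is, if anything, more scrupulous about both the construction and its limits.
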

\begin{proof}
Let $F:M\to\mathbb{R}^{n-2}$ be the invariant fibration for both systems, and $\{U_c\}_{c\in\mathcal{I}}$ be an open covering of $F(M)$ with $F^{-1}(U_c)\simeq U_c\times \mathbb{T}^2$. In $F^{-1}(U_c)$, we can work with the coordinates adapted to the fibration, with respect to which the restriction $\mu_c\in\Omega^n(F^{-1}(U_c))$ of $\mu$, writes
\[
\mu_c = df_1\wedge ... \wedge df_{n-2}\wedge \omega_c,
\]
for a 2-form $\omega_c$ invariant with respect to the vector fields $X_c,Y_c\in\mathfrak{X}(F^{-1}(U_c))$ related respectively with $X,Y\in\mathfrak{X}(M)$, via the inclusion map $i_c:F^{-1}(U_c)\to M$. We then have that
\begin{eqnarray*}
\mathcal{L}_{X_c}(\omega_c(X_c,Y_c)) &= \mathcal{L}_{X_c}(i_{X_c}i_{Y_c}\omega_c) = i_{X_c}\mathcal{L}_{X_c}i_{Y_c}\omega_c + i_{[X_c,X_c]}i_{Y_c}\omega_c \\
&=i_{X_c}i_{Y_c}\mathcal{L}_{X_c}\omega_c + i_{X_c}i_{[X_c,Y_c]}\omega_c \\
&=i_{X_c}i_{[X_c,Y_c]}\omega_c
\end{eqnarray*}
since $\mathcal{L}_{X_c}\omega_c = 0$. Given that $[X_c,Y_c]$ is $i_c$-related with $[X,Y]$, we also have $[X_c,Y_c]=\lambda_c X_c$, where $\lambda_c=i_c^* \lambda\in\mathcal{C}^{\infty}(F^{-1}(U_c))$ since if $z\in F^{-1}(U_c)$ one has
\[
(di_c)_z[X_c,Y_c](z) = [X,Y](i_c(z))= \lambda(i_c(z)) X(i_c(z)) = (di_{c})_z(\lambda_c X_c)(z).
\]
This analysis allows us to conclude that $\omega_c(X_c,Y_c)$ is a first integral of $X_c$. We now extend this result to all of $M$. Let us consider a partition of unity $\{\rho_c\}_{c\in\mathcal{I}}$ subordinate to the open covering $\{U_c\}_{c\in\mathcal{I}}$ of $F(M)$, and define the associated partition of unity $\{F^*\rho_c\}_{c\in\mathcal{I}}$ subordinate to the open covering $\{F^{-1}(U_c)\}_{c\in\mathcal{I}}$ of $F^{-1}(F(M))$. We now consider the function $I\in\mathcal{C}^{\infty}(M)$ defined as
\begin{equation}\label{eq:extendedFI}
I=\sum_{c\in\mathcal{I}} (F^*\rho_c)I_c,
\end{equation}
where the $I_c\in\mathcal{C}^\infty(M)$ are defined as
\[
I_c(m)=\begin{cases}
    \omega_c (X_c,Y_c)(m),\,\,m\in F^{-1}(U_c),\\
    0,\,\,m\in M\setminus F^{-1}(U_c).
\end{cases}
\]
We notice that on the fibers of $F$, $F^{-1}(\{c\})$, the vector field $[\tilde{X}_c,\tilde{Y}_c]$, where $\tilde{X}_c,\tilde{Y}_c$ are related with $X$ and $Y$ via the inclusion $\tilde{i}_c:F^{-1}(\{c\})\to M$, is Hamiltonian with respect to the symplectic form $\tilde{\omega}_c$ related via $\tilde{i}_c$ with $\mu$, where the Hamiltonian function is $\tilde{\omega}_c(\tilde{Y}_c,\tilde{X}_c)$. This can be seen via the derivation
\begin{eqnarray*}
i_{[\tilde{X}_c,\tilde{Y}_c]}\tilde{\omega}_c &= \mathcal{L}_{\tilde{X}_c}i_{\tilde{Y}_c}\tilde{\omega}_c - i_{\tilde{Y}_c}\mathcal{L}_{\tilde{X}_c}\tilde{\omega}_c = \mathcal{L}_{\tilde{X}_c}i_{\tilde{Y}_c}\tilde{\omega}_c \\
&=di_{\tilde{X}_c}i_{\tilde{Y}_c}\tilde{\omega}_c + i_{\tilde{X}_c}di_{\tilde{Y}_c}\tilde{\omega}_c = d\tilde{\omega}_c(\tilde{Y}_c,\tilde{X}_c). 
\end{eqnarray*}
As a consequence, we have that $[\tilde{X}_c,\tilde{Y}_c]=0$ if and only if $\tilde{\omega}_c(\tilde{Y}_c,\tilde{X}_c)$ is constant. We thus conclude that either $\lambda$ is identically zero and hence $Y$ is a dynamical symmetry of $X$, or $I$ in \eqref{eq:extendedFI} is not constant on the joint level set $F^{-1}(\{c\})$ and hence it is not functionally dependent on the other $n-2$ first integrals. Both these options imply the {\it B}-integrability of $X$. If $\lambda$ is identically zero, then $X$ is {\it B}-integrable on $\mathbb{T}^2$. Otherwise, $X$ is {\it B}-integrable on $S^1$.
\end{proof}

\bibliography{biblio}


\begin{thebibliography}{43}
\ifx \bisbn   \undefined \def \bisbn  #1{ISBN #1}\fi
\ifx \binits  \undefined \def \binits#1{#1}\fi
\ifx \bauthor  \undefined \def \bauthor#1{#1}\fi
\ifx \batitle  \undefined \def \batitle#1{#1}\fi
\ifx \bjtitle  \undefined \def \bjtitle#1{#1}\fi
\ifx \bvolume  \undefined \def \bvolume#1{\textbf{#1}}\fi
\ifx \byear  \undefined \def \byear#1{#1}\fi
\ifx \bissue  \undefined \def \bissue#1{#1}\fi
\ifx \bfpage  \undefined \def \bfpage#1{#1}\fi
\ifx \blpage  \undefined \def \blpage #1{#1}\fi
\ifx \burl  \undefined \def \burl#1{\textsf{#1}}\fi
\ifx \doiurl  \undefined \def \doiurl#1{\url{https://doi.org/#1}}\fi
\ifx \betal  \undefined \def \betal{\textit{et al.}}\fi
\ifx \binstitute  \undefined \def \binstitute#1{#1}\fi
\ifx \binstitutionaled  \undefined \def \binstitutionaled#1{#1}\fi
\ifx \bctitle  \undefined \def \bctitle#1{#1}\fi
\ifx \beditor  \undefined \def \beditor#1{#1}\fi
\ifx \bpublisher  \undefined \def \bpublisher#1{#1}\fi
\ifx \bbtitle  \undefined \def \bbtitle#1{#1}\fi
\ifx \bedition  \undefined \def \bedition#1{#1}\fi
\ifx \bseriesno  \undefined \def \bseriesno#1{#1}\fi
\ifx \blocation  \undefined \def \blocation#1{#1}\fi
\ifx \bsertitle  \undefined \def \bsertitle#1{#1}\fi
\ifx \bsnm \undefined \def \bsnm#1{#1}\fi
\ifx \bsuffix \undefined \def \bsuffix#1{#1}\fi
\ifx \bparticle \undefined \def \bparticle#1{#1}\fi
\ifx \barticle \undefined \def \barticle#1{#1}\fi
\bibcommenthead
\ifx \bconfdate \undefined \def \bconfdate #1{#1}\fi
\ifx \botherref \undefined \def \botherref #1{#1}\fi
\ifx \url \undefined \def \url#1{\textsf{#1}}\fi
\ifx \bchapter \undefined \def \bchapter#1{#1}\fi
\ifx \bbook \undefined \def \bbook#1{#1}\fi
\ifx \bcomment \undefined \def \bcomment#1{#1}\fi
\ifx \oauthor \undefined \def \oauthor#1{#1}\fi
\ifx \citeauthoryear \undefined \def \citeauthoryear#1{#1}\fi
\ifx \endbibitem  \undefined \def \endbibitem {}\fi
\ifx \bconflocation  \undefined \def \bconflocation#1{#1}\fi
\ifx \arxivurl  \undefined \def \arxivurl#1{\textsf{#1}}\fi
\csname PreBibitemsHook\endcsname

\bibitem[\protect\citeauthoryear{Kozlov}{1985}]{kozlov1985integration}
\begin{barticle}
\bauthor{\bsnm{Kozlov}, \binits{V.V.}}:
\batitle{On the {I}ntegration {T}heory of {E}quations of {N}onholonomic
  {Me}chanics}.
\bjtitle{Mech}
\bvolume{8},
\bfpage{85}--\blpage{107}
(\byear{1985})
\end{barticle}
\endbibitem

\bibitem[\protect\citeauthoryear{Hermans}{1995}]{hermans1995symmetric}
\begin{barticle}
\bauthor{\bsnm{Hermans}, \binits{J.}}:
\batitle{A symmetric sphere rolling on a surface}.
\bjtitle{Nonlinearity}
\bvolume{8}(\bissue{4}),
\bfpage{493}
(\byear{1995})
\end{barticle}
\endbibitem

\bibitem[\protect\citeauthoryear{Bogoyavlenskij}{1998}]{bogoyavlenskij1998extended}
\begin{barticle}
\bauthor{\bsnm{Bogoyavlenskij}, \binits{O.I.}}:
\batitle{Extended integrability and bi-{H}amiltonian systems}.
\bjtitle{Communications in mathematical physics}
\bvolume{196},
\bfpage{19}--\blpage{51}
(\byear{1998})
\end{barticle}
\endbibitem

\bibitem[\protect\citeauthoryear{Bates and Cushman}{1999}]{bates1999completely}
\begin{barticle}
\bauthor{\bsnm{Bates}, \binits{L.}},
\bauthor{\bsnm{Cushman}, \binits{R.}}:
\batitle{What is a completely integrable nonholonomic dynamical system?}
\bjtitle{Reports on Mathematical Physics}
\bvolume{44}(\bissue{1-2}),
\bfpage{29}--\blpage{35}
(\byear{1999})
\end{barticle}
\endbibitem

\bibitem[\protect\citeauthoryear{Fedorov}{}]{Fedorov1999}
\begin{botherref}
\oauthor{\bsnm{Fedorov}, \binits{Y.N.}}:
Systems with an {I}nvariant {M}easure on {L}ie {G}roups,
pp. 350--356.
Springer,
(1999)
\end{botherref}
\endbibitem

\bibitem[\protect\citeauthoryear{Zenkov and Bloch}{2000}]{zenkov2000dynamics}
\begin{barticle}
\bauthor{\bsnm{Zenkov}, \binits{D.V.}},
\bauthor{\bsnm{Bloch}, \binits{A.M.}}:
\batitle{Dynamics of the n-dimensional {S}uslov problem}.
\bjtitle{Journal of Geometry and Physics}
\bvolume{34}(\bissue{2}),
\bfpage{121}--\blpage{136}
(\byear{2000})
\end{barticle}
\endbibitem

\bibitem[\protect\citeauthoryear{Borisov and Mamaev}{2002}]{borisov2002plane}
\begin{barticle}
\bauthor{\bsnm{Borisov}, \binits{A.V.}},
\bauthor{\bsnm{Mamaev}, \binits{I.S.}}:
\batitle{The {R}olling {B}ody {M}otion {O}f a {R}igid {B}ody on a {P}lane and a
  {S}phere. {H}ierarchy of {D}ynamics}.
\bjtitle{Regular and Chaotic Dynamics}
\bvolume{7},
\bfpage{177}--\blpage{200}
(\byear{2002})
\end{barticle}
\endbibitem

\bibitem[\protect\citeauthoryear{Borisov et~al.}{2002}]{borisov2002surface}
\begin{barticle}
\bauthor{\bsnm{Borisov}, \binits{A.V.}},
\bauthor{\bsnm{Mamaev}, \binits{I.S.}},
\bauthor{\bsnm{Kilin}, \binits{A.A.}}:
\batitle{The {R}olling {M}otion of a {B}all on a {Su}rface. {N}ew {In}tegrals
  and {H}ierarchy of {D}ynamics}.
\bjtitle{Regular and Chaotic Dynamics}
\bvolume{7},
\bfpage{201}--\blpage{219}
(\byear{2002})
\end{barticle}
\endbibitem

\bibitem[\protect\citeauthoryear{{Fass\`o} et~al.}{2005}]{fasso2005periodic}
\begin{barticle}
\bauthor{\bsnm{{Fass\`o}}, \binits{F.}},
\bauthor{\bsnm{Giacobbe}, \binits{A.}},
\bauthor{\bsnm{Sansonetto}, \binits{N.}}:
\batitle{Periodic flows, rank-two {P}oisson structures, and nonholonomic
  mechanics}.
\bjtitle{Regular and Chaotic Dynamics}
\bvolume{10}(\bissue{3}),
\bfpage{267}--\blpage{284}
(\byear{2005})
\end{barticle}
\endbibitem

\bibitem[\protect\citeauthoryear{Fass{\`o} and
  Giacobbe}{2002}]{fasso2002geometric}
\begin{barticle}
\bauthor{\bsnm{Fass{\`o}}, \binits{F.}},
\bauthor{\bsnm{Giacobbe}, \binits{A.}}:
\batitle{Geometric structure of “broadly integrable” {H}amiltonian
  systems}.
\bjtitle{Journal of Geometry and Physics}
\bvolume{44}(\bissue{2-3}),
\bfpage{156}--\blpage{170}
(\byear{2002})
\end{barticle}
\endbibitem

\bibitem[\protect\citeauthoryear{Borisov and
  Mamaev}{2008}]{borisov2008conservation}
\begin{barticle}
\bauthor{\bsnm{Borisov}, \binits{A.V.}},
\bauthor{\bsnm{Mamaev}, \binits{I.S.}}:
\batitle{Conservation laws, hierarchy of dynamics and explicit integration of
  nonholonomic systems}.
\bjtitle{Regular and Chaotic Dynamics}
\bvolume{13},
\bfpage{443}--\blpage{490}
(\byear{2008})
\end{barticle}
\endbibitem

\bibitem[\protect\citeauthoryear{Bolsinov
  et~al.}{2011}]{bolsinov2011hamiltonization}
\begin{barticle}
\bauthor{\bsnm{Bolsinov}, \binits{A.V.}},
\bauthor{\bsnm{Borisov}, \binits{A.V.}},
\bauthor{\bsnm{Mamaev}, \binits{I.S.}}:
\batitle{Hamiltonization of non-holonomic systems in the neighborhood of
  invariant manifolds}.
\bjtitle{Regular and Chaotic Dynamics}
\bvolume{16},
\bfpage{443}--\blpage{464}
(\byear{2011})
\end{barticle}
\endbibitem

\bibitem[\protect\citeauthoryear{Kozlov}{2013}]{kozlov2013euler}
\begin{barticle}
\bauthor{\bsnm{Kozlov}, \binits{V.V.}}:
\batitle{The {E}uler-{J}acobi-{L}ie integrability theorem}.
\bjtitle{Regular and Chaotic Dynamics}
\bvolume{18},
\bfpage{329}--\blpage{343}
(\byear{2013})
\end{barticle}
\endbibitem

\bibitem[\protect\citeauthoryear{Fass{\`o} et~al.}{2015}]{fasso2015quasi}
\begin{barticle}
\bauthor{\bsnm{Fass{\`o}}, \binits{F.}},
\bauthor{\bsnm{Garc{\'\i}a-Naranjo}, \binits{A.}},
\bauthor{\bsnm{Giacobbe}, \binits{A.}}:
\batitle{Quasi-periodicity in relative quasi-periodic tori}.
\bjtitle{Nonlinearity}
\bvolume{28}(\bissue{11}),
\bfpage{4281}
(\byear{2015})
\end{barticle}
\endbibitem

\bibitem[\protect\citeauthoryear{Kozlov}{2019}]{kozlov2019tensor}
\begin{barticle}
\bauthor{\bsnm{Kozlov}, \binits{V.V.}}:
\batitle{Tensor invariants and integration of differential equations}.
\bjtitle{Russian Mathematical Surveys}
\bvolume{74}(\bissue{1}),
\bfpage{111}
(\byear{2019})
\end{barticle}
\endbibitem

\bibitem[\protect\citeauthoryear{Balseiro and
  Sansonetto}{2022}]{balseiro2022first}
\begin{barticle}
\bauthor{\bsnm{Balseiro}, \binits{P.}},
\bauthor{\bsnm{Sansonetto}, \binits{N.}}:
\batitle{First {I}ntegrals and {S}ymmetries of {N}onholonomic {S}ystems}.
\bjtitle{Archive for Rational Mechanics and Analysis}
\bvolume{244}(\bissue{2}),
\bfpage{343}--\blpage{389}
(\byear{2022})
\end{barticle}
\endbibitem

\bibitem[\protect\citeauthoryear{Arnold}{}]{arnol2013mathematical}
\begin{botherref}
\oauthor{\bsnm{Arnold}, \binits{V.I.}}:
Mathematical {M}ethods of {C}lassical {M}echanics
vol. 60.
Springer,
(2013)
\end{botherref}
\endbibitem

\bibitem[\protect\citeauthoryear{Duistermaat}{1980}]{duistermaat1980}
\begin{barticle}
\bauthor{\bsnm{Duistermaat}, \binits{J.J.}}:
\batitle{Dynamical systems with multivalued integrals on a torus}.
\bjtitle{Communications on Pure and Applied Mathematics}
\bvolume{32},
\bfpage{687}--\blpage{706}
(\byear{1980})
\end{barticle}
\endbibitem

\bibitem[\protect\citeauthoryear{Woodhouse}{}]{woodhouse1992geometric}
\begin{botherref}
\oauthor{\bsnm{Woodhouse}, \binits{N.M.J.}}:
Geometric {Q}uantization.
Oxford university press,
(1992)
\end{botherref}
\endbibitem

\bibitem[\protect\citeauthoryear{Karasev and Maslov}{}]{KM1993}
\begin{botherref}
\oauthor{\bsnm{Karasev}, \binits{M.V.}},
\oauthor{\bsnm{Maslov}, \binits{V.P.}}:
Nonlinear {P}oisson {B}rackets. {G}eometry and {Q}uantization
vol. 119.
American Mathematical Soc.,
(1993)
\end{botherref}
\endbibitem

\bibitem[\protect\citeauthoryear{Nekhoroshev}{1972}]{nekhoroshev}
\begin{barticle}
\bauthor{\bsnm{Nekhoroshev}, \binits{N.N.}}:
\batitle{Action–angle variables and their generalizations}.
\bjtitle{Trudy Moskov. Mat. Obsc.}
\bvolume{26},
\bfpage{181}--\blpage{198}
(\byear{1972})
\end{barticle}
\endbibitem

\bibitem[\protect\citeauthoryear{Mischenko and Fomenko}{1978}]{MF}
\begin{barticle}
\bauthor{\bsnm{Mischenko}, \binits{A.S.}},
\bauthor{\bsnm{Fomenko}, \binits{A.T.}}:
\batitle{Generalized {L}iouville method of integration of {H}amiltonian
  systems}.
\bjtitle{Functional Analysis and Applications}
\bvolume{12},
\bfpage{113}--\blpage{121}
(\byear{1978})
\end{barticle}
\endbibitem

\bibitem[\protect\citeauthoryear{Fass{\`o}}{2005}]{fasso2005superintegrable}
\begin{barticle}
\bauthor{\bsnm{Fass{\`o}}, \binits{F.}}:
\batitle{Superintegrable {H}amiltonian {S}ystems: {G}eometry and
  {P}erturbations}.
\bjtitle{Acta Applicandae Mathematica}
\bvolume{87},
\bfpage{93}--\blpage{121}
(\byear{2005})
\end{barticle}
\endbibitem

\bibitem[\protect\citeauthoryear{Fass{\`o}}{}]{fasso2022perturbation}
\begin{botherref}
\oauthor{\bsnm{Fass{\`o}}, \binits{F.}}:
Perturbation of {S}uperintegrable {H}amiltonian {S}ystems.
In: Perturbation Theory: Mathematics, Methods and Applications,
pp. 307--337.
Springer,
(2022)
\end{botherref}
\endbibitem

\bibitem[\protect\citeauthoryear{Sepe and
  Ng{\d{o}}c}{2018}]{sepe2018integrable}
\begin{barticle}
\bauthor{\bsnm{Sepe}, \binits{D.}},
\bauthor{\bsnm{Ng{\d{o}}c}, \binits{S.V.}}:
\batitle{Integrable systems, symmetries, and quantization}.
\bjtitle{Letters in Mathematical Physics}
\bvolume{108}(\bissue{3}),
\bfpage{499}--\blpage{571}
(\byear{2018})
\end{barticle}
\endbibitem

\bibitem[\protect\citeauthoryear{Weinstein}{1971}]{wenstein1971}
\begin{barticle}
\bauthor{\bsnm{Weinstein}, \binits{A.}}:
\batitle{Symplectic manifolds and their lagrangian submanifolds}.
\bjtitle{Advances in MAthematics}
\bvolume{6},
\bfpage{329}--\blpage{346}
(\byear{1971})
\end{barticle}
\endbibitem

\bibitem[\protect\citeauthoryear{Fass\`o}{2005}]{FF2005}
\begin{barticle}
\bauthor{\bsnm{Fass\`o}, \binits{F.}}:
\batitle{Superintegrable {H}amiltonian {S}ystems: {G}eometry and
  {P}erturbations}.
\bjtitle{Acta Applicandae Mathematica}
\bvolume{87}(\bissue{1-3}),
\bfpage{98}--\blpage{121}
(\byear{2005})
\end{barticle}
\endbibitem

\bibitem[\protect\citeauthoryear{Laurent-Gengoux et~al.}{2011}]{LGMVH}
\begin{barticle}
\bauthor{\bsnm{Laurent-Gengoux}, \binits{C.}},
\bauthor{\bsnm{Miranda}, \binits{E.}},
\bauthor{\bsnm{Vanhecke}, \binits{P.}}:
\batitle{Action-angle coordinates for integrable systems on {P}oisson
  manifolds}.
\bjtitle{International mathematics research notices}
\bvolume{8},
\bfpage{1839}--\blpage{1869}
(\byear{2011})
\end{barticle}
\endbibitem

\bibitem[\protect\citeauthoryear{Fass\`o and Sansonetto}{2007}]{FS2007}
\begin{barticle}
\bauthor{\bsnm{Fass\`o}, \binits{F.}},
\bauthor{\bsnm{Sansonetto}, \binits{N.}}:
\batitle{Integrable almost-symplectic {H}amiltonian systems}.
\bjtitle{Journal of Mathematical Physics}
\bvolume{3},
\bfpage{13}
(\byear{2007})
\end{barticle}
\endbibitem

\bibitem[\protect\citeauthoryear{Sansonetto and Sepe}{2013}]{SS2012}
\begin{barticle}
\bauthor{\bsnm{Sansonetto}, \binits{N.}},
\bauthor{\bsnm{Sepe}, \binits{D.}}:
\batitle{Twisted isotropic realisations of twisted {P}oisson structures}.
\bjtitle{Journal of Geometric Mechanics}
\bvolume{5}(\bissue{2}),
\bfpage{233}--\blpage{256}
(\byear{2013})
\end{barticle}
\endbibitem

\bibitem[\protect\citeauthoryear{Banyaga}{1999}]{banyaga1999geometry}
\begin{botherref}
\oauthor{\bsnm{Banyaga}, \binits{A.}}:
The geometry surrounding the {A}rnold-{L}iouville {T}heorem.
Advances in geometry,
53--69
(1999)
\end{botherref}
\endbibitem

\bibitem[\protect\citeauthoryear{Jovanovi{\'c}}{2012}]{jovanovic2012noncommutative}
\begin{botherref}
\oauthor{\bsnm{Jovanovi{\'c}}, \binits{B.}}:
Noncommutative integrability and action-angle variables in contct geometry
(2012)
\end{botherref}
\endbibitem

\bibitem[\protect\citeauthoryear{Khesin and
  Tabachnikov}{2010}]{khesin2010contact}
\begin{barticle}
\bauthor{\bsnm{Khesin}, \binits{B.}},
\bauthor{\bsnm{Tabachnikov}, \binits{S.}}:
\batitle{Contact complete integrability}.
\bjtitle{Regular and Chaotic Dynamics}
\bvolume{15},
\bfpage{504}--\blpage{520}
(\byear{2010})
\end{barticle}
\endbibitem

\bibitem[\protect\citeauthoryear{Banyaga and Molino}{}]{banyaga2017complete}
\begin{botherref}
\oauthor{\bsnm{Banyaga}, \binits{A.}},
\oauthor{\bsnm{Molino}, \binits{P.}}:
Complete integrability in contact geometry.
In: A Brief Introduction to Symplectic and Contact Manifolds,
pp. 107--158.
World Scientific,
(2017)
\end{botherref}
\endbibitem

\bibitem[\protect\citeauthoryear{Fass{\`o} and Sansonetto}{2009}]{FS2009}
\begin{barticle}
\bauthor{\bsnm{Fass{\`o}}, \binits{F.}},
\bauthor{\bsnm{Sansonetto}, \binits{N.}}:
\batitle{An elemental overview of the nonholonomic {N}oether theorem}.
\bjtitle{International Journal of Geometric Methods in Modern Physics}
\bvolume{6}(\bissue{08}),
\bfpage{1343}--\blpage{1355}
(\byear{2009})
\end{barticle}
\endbibitem

\bibitem[\protect\citeauthoryear{Zung}{2018}]{zung2018}
\begin{barticle}
\bauthor{\bsnm{Zung}, \binits{N.T.}}:
\batitle{A conceptual approach to the problem of action-angle variables}.
\bjtitle{Archive of Rational Mechanics and Analysis}
\bvolume{229}(\bissue{2}),
\bfpage{789}--\blpage{833}
(\byear{2018})
\end{barticle}
\endbibitem

\bibitem[\protect\citeauthoryear{Tsiganov}{2010}]{tsiganov2010integrable}
\begin{botherref}
\oauthor{\bsnm{Tsiganov}, \binits{A.V.}}:
Integrable {E}uler top and nonholonomic {C}haplygin ball.
arXiv preprint arXiv:1002.1123
(2010)
\end{botherref}
\endbibitem

\bibitem[\protect\citeauthoryear{Gaji{\'c} and
  Jovanovi{\'c}}{2019}]{gajic2019nonholonomic}
\begin{barticle}
\bauthor{\bsnm{Gaji{\'c}}, \binits{B.}},
\bauthor{\bsnm{Jovanovi{\'c}}, \binits{B.}}:
\batitle{Nonholonomic connections, time reparametrizations, and integrability
  of the rolling ball over a sphere}.
\bjtitle{Nonlinearity}
\bvolume{32}(\bissue{5}),
\bfpage{1675}
(\byear{2019})
\end{barticle}
\endbibitem

\bibitem[\protect\citeauthoryear{Garc{\'\i}a-Naranjo}{2019}]{garcia2019integrability}
\begin{barticle}
\bauthor{\bsnm{Garc{\'\i}a-Naranjo}, \binits{L.C.}}:
\batitle{Integrability of the n-dimensional {A}xially {S}ymmetric {C}haplygin
  {S}phere}.
\bjtitle{Regular and Chaotic Dynamics}
\bvolume{24},
\bfpage{450}--\blpage{463}
(\byear{2019})
\end{barticle}
\endbibitem

\bibitem[\protect\citeauthoryear{Arnold et~al.}{1988}]{arnold1988dynamical}
\begin{botherref}
\oauthor{\bsnm{Arnold}, \binits{V.I.}},
\oauthor{\bsnm{Kozlov}, \binits{V.V.}},
\oauthor{\bsnm{Neishtadt}, \binits{A.I.}}:
Dynamical {S}ystems {III}, {E}ncyclopaedia of {M}athematical {S}ciences, vol.
  3.
Springer
(1988)
\end{botherref}
\endbibitem

\bibitem[\protect\citeauthoryear{Llibre and
  Peralta-Salas}{2012}]{llibre2012note}
\begin{barticle}
\bauthor{\bsnm{Llibre}, \binits{J.}},
\bauthor{\bsnm{Peralta-Salas}, \binits{D.}}:
\batitle{A note on the first integrals of vector fields with integrating
  factors and normalizers}.
\bjtitle{SIGMA. Symmetry, Integrability and Geometry: Methods and Applications}
\bvolume{8},
\bfpage{035}
(\byear{2012})
\end{barticle}
\endbibitem

\bibitem[\protect\citeauthoryear{Sinai}{}]{sinaui1976introduction}
\begin{botherref}
\oauthor{\bsnm{Sinai}, \binits{Y.}}:
Introduction to {E}rgodic {T}heory
vol. 18.
Princeton University Press,
(1976)
\end{botherref}
\endbibitem

\bibitem[\protect\citeauthoryear{Perrella et~al.}{2023}]{perrella2023existence}
\begin{botherref}
\oauthor{\bsnm{Perrella}, \binits{D.}},
\oauthor{\bsnm{Duignan}, \binits{N.}},
\oauthor{\bsnm{Pfefferl{\'e}}, \binits{D.}}:
Existence of global symmetries of divergence-free fields with first integrals.
Journal of Mathematical Physics
\textbf{64}(5)
(2023)
\end{botherref}
\endbibitem

\end{thebibliography}
\end{document}